\def\rit{\mathbb{R}}
\def\zit{\mathbb{Z}}   
\def\nit{\mathbb{N}}
\def\qit{\mathbb{Q}} 
\def\cit{\mathbb{C}}
\newcommand{\pf}{{\em Proof.~}}
\newcommand{\qed}{\hfill~~\mbox{$\Box$}}
\newenvironment{proof}{\smallskip \noindent \pf}{\qed \bigskip}
\newtheorem{theorem}{Theorem}[section]
\newtheorem{proposition}[theorem]{Proposition}
\newtheorem{definition}[theorem]{Definition}
\newtheorem{corollary}[theorem]{Corollary}
\newtheorem{remark}[theorem]{Remark}
\newtheorem{example}[theorem]{Example}
\DeclareMathOperator{\card}{Card}
\DeclareMathOperator{\Inter}{Int}
\DeclareMathOperator{\vol}{vol}
\DeclareMathOperator{\Hom}{Hom}
\DeclareMathOperator{\gr}{gr}
\DeclareMathOperator{\Spec}{Spec}
\DeclareMathOperator{\conv}{conv}
\DeclareMathOperator{\Ehr}{Ehr}
\DeclareMathOperator{\Card}{Card}
\DeclareMathOperator{\wt}{wt}
\DeclareMathOperator{\supp}{supp}
\begin{document}


\title{\bf Ehrhart polynomials of polytopes and spectrum at infinity of Laurent polynomials}
\author{\sc Antoine Douai \\
Universit\'e C\^ote d'Azur, CNRS, LJAD, FRANCE\\
Email address: antoine.douai@univ-cotedazur.fr}

\maketitle

\begin{abstract}
Gathering different results from singularity theory, geometry and combinatorics, we show that 
the spectrum at infinity of a tame Laurent polynomial counts lattice points in polytopes and we deduce 
an effective algorithm in order to compute the Ehrhart polynomial of a simplex containing the origin as an interior point. 
\end{abstract}

\section{Introduction}

The spectrum at infinity of a tame Laurent polynomial $f$, defined by C. Sabbah in \cite{Sab}, is a sequence  $(\beta_1 , \cdots , \beta_{\mu})$ of rational numbers which is related to various concepts in singularity theory (monodromy, Hodge and Kashiwara-Malgrange filtrations, Brieskorn lattices...) and is in general difficult to handle. However, 
if $f$ is convenient and nondegenerate with respect to its Newton polytope in the sense of Kouchnirenko \cite{K} (this is generically the case), this spectrum has a very concrete description: the sum $\sum_{i=1}^{\mu} z^{\beta_i}$ is equal to the Hilbert-Poincar\'e series
of the Jacobian ring of $f$ graded by the Newton filtration, see \cite{DoSa1}. 
It follows from \cite{K} that 
$\sum_{i=1}^{\mu} z^{\beta_i}= (1-z)^n \sum_{v\in N}z^{\nu (v)}$ where $\nu$ is the Newton filtration of the Newton polytope $P$ of $f$ and $\mu$ is the normalized volume of $P$, see \cite{D}, \cite{D0}.
Because the right hand side depends only on $P$, we will also call it the {\em Newton spectrum of} $P$ and we will denote it by $\Spec_P (z)$. This is recorded in Section \ref{sec:Spectra}, where we also recall the basic definitions.

Once we have this description, it follows from the work of A. Stapledon \cite{Stapledon} that the spectrum at infinity of a convenient and nondegenerate Laurent polynomial 
(equivalently, the Newton spectrum of its Newton polytope) counts "weighted" lattice points in its Newton polytope $P$ and its integer dilates, more precisely that it is a weighted $\delta$-vector as defined in {\em loc. cit.} In particular, according to an observation made in \cite{Stapledon}, the $\delta$-vector $\delta_P (z) =\delta_0 +\delta_1 z +\cdots +\delta_n z^n$ of a lattice polytope $P$ in $\rit^n$ containing the origin as an interior point, hence its Ehrhart polynomial, can be obtained effortlessly from its Newton spectrum $\sum_{i=1}^{\mu} z^{\beta_i}$: the coefficient $\delta_i$ is equal to the number of $\beta_k$'s such that $\beta_k \in ]i-1,i]$. See Section \ref{sec:EhrAndSpectrum}, where we also recall the definitions of $\delta$-vectors and Ehrhart polynomials. It should be emphasized that the Newton spectrum of a polytope $P$ is somewhat finer than its 
$\delta$-vector: both coincide if and only if $P$ is reflexive, see Proposition \ref{prop:ReflexivevsIntegral}.

This is particularly fruitful when $P$ is a reduced simplex in $\rit^n$ in the sense of \cite{Conrads} (see Section \ref{sec:EPP} for the definition of a reduced simplex and its weight) because we have a very simple closed formula for its Newton spectrum, see Theorem \ref{theo:specEPP}. This formula only involves the weight of the simplex and is based on \cite[Theorem 1]{DoSa2} whose proof uses classical tools in singularity theory. This provides an effective algorithm in order to compute the $\delta$-vectors and the Ehrhart polynomials of reduced simplices, see Section \ref{sec:Algo}. For instance, let us consider the simplex
$$\Delta :=\conv ( (1,0, 0,0), (0,1,0,0), (0,0,1,0), (0,0,0,1),  (-1,-1,-1,-5))$$
 in $\rit^4$. 
Using Theorem \ref{theo:specEPP}, it is readily checked that its Newton spectrum is
$$\Spec_{\Delta}  (z) =1+z+z^2 +z^3 +z^4 +z^{16/5}+z^{12/5}+z^{8/5}+z^{4/5}$$
and it follows from Proposition \ref{prop:SpecEgalDelta} that $\delta_0 =1$, $\delta_1 = 2$, $\delta_2 =2$, $\delta_3 =2$, $\delta_4 =2$. Finally, its Ehrhart polynomial is
$$L_{\Delta}  (z)= \frac{1}{24}(9z^4 +10 z^3 +75 z^2 +50z+24).$$
See Example \ref{ex:ExSpecDim4Suite}. 

Notice that it is also possible to get in a similar way the $\delta$-vectors of Newton polytopes of convenient and nondegenerate polynomials from their toric Newton spectrum (as defined in \cite[Definition 3.1]{D}). This can be applied for instance to the Mordell-Pommersheim tetrahedron, that is the convex hull of $(0,0,0)$, $(a,0,0)$, $(0,b,0)$, $(0,0,c))$
where $a$, $b$ and $c$ are positive integers, which is the Newton polytope of the Brieskorn-Pham polynomial $f(u_1 ,u_2 ,u_3 )=u_1^a +u_2^b +u_3 ^c$ on $\cit ^3$, a very familiar object in singularity theory. See Section \ref{sec:ToricSpectrum}.

In this context, other questions may arise: for instance it is expected that the variance of the spectrum at infinity of a Laurent polynomial in $(\cit^* )^n$ is bounded below by $n/12$ (this is a global variant of C. Hertling's conjecture \cite{Her}, see \cite{D0} and the references therein). 
By Proposition \ref{prop:ReflexivevsIntegral}, this would give informations about the distribution of the $\delta$-vector in the reflexive case. See Remark \ref{rem:Variance}. 
In the opposite direction, the theory of polytopes is useful in order to get results on the singularity side: for instance, using \cite{Payne} it is readily seen that the spectrum at infinity of a tame regular function is not always unimodal (this is discussed in Remark \ref{rem:Unimodality}). \\

\noindent {\bf Acknowledgements.} I thank R. Davis for pointing to me the references \cite{BDS} and \cite{Payne} about unimodality.

\section{The Newton spectrum of a polytope}

\label{sec:Spectra}
 
In this section, we first recall some basic facts about polytopes that we will use. We then define the Newton spectrum of a polytope in Section \ref{sec:DefSpectrumPolytope} and we give a closed formula for the Newton spectrum of simplices in Section \ref{sec:EPP}.

\subsection{Polytopes (basics)}

\label{sec:Polytopes}

In this text, $N$ is the lattice $\zit^n$, $M$ is the dual lattice 
$\langle \ ,\ \rangle$ is the canonical pairing between $N_{\rit}$ and $M_{\rit}$.
A {\em lattice polytope} is the convex hull of a finite set of $N$.
If $P\subset N_{\rit}$ is a $n$-dimensional lattice polytope containing the origin as an interior point
there exists, for each facet $F$ of $P$, $u_F \in M_{\qit}$ such that
$$P\subset \{n\in N_{\rit},\ \langle u_F ,n\rangle \leq 1 \}\ \mbox{and}\
F=P\cap \{x\in N_{\rit},\ \langle u_F , x\rangle = 1 \}.$$
This provides the hyperplane presentation      
\begin{equation}\label{eq:PresentationPolytope}
P=\cap_F \{x\in N_{\rit},\ \langle u_F , x\rangle \leq 1 \}.
\end{equation}
 The set
\begin{equation}\label{eq:DefPolar}
P^{\circ}:=\{y\in M_{\rit},\ \langle y, x \rangle\leq 1\ \mbox{for all}\ x\in P\}
\end{equation}
is the {\em polar polytope} of $P$. It happens that $P^{\circ}$ is indeed a rational polytope (the convex hull of a finite set of $N_{\qit}$) if $P$ contains the origin as an interior point:
the vertices of $P^{\circ}$ are in one-to-one correspondance with the facets of $P$ by                   
\begin{equation}\label{eq:VerticesPolaire}
u_{F}\ \mbox{vertice of}\ P^{\circ} \Leftrightarrow \  F =\{x\in N_{\rit}, \langle u_{F}, x\rangle =1\}.
\end{equation} 
A lattice polytope $P$ is {\em reflexive} if it contains the origin as an interior point and
if $P^{\circ}$ is a lattice polytope.

If $P\subset N_{\rit}$ is a full dimensional lattice polytope containing the origin as an interior point, 
we get a complete fan $\Sigma_P$ in $N_{\rit}$ by taking the cones over the faces of $P$. We will denote by $X_{\Sigma_P}$ the complete toric variety  associated with the fan $\Sigma_P$. 
We will always assume that the fan $\Sigma_P$ is simplicial.
The {\em Newton function} of $P$ is the function
\begin{equation}\nonumber
\nu :N_{\rit}\rightarrow \rit
\end{equation}
which takes the value $1$ at the vertices of $P$ and which is linear on each cone of the fan $\Sigma_P$. 
Alternatively, $\nu (v)=\max_F <u_F , v>$ where the maximum is taken over the facets of $P$ and the vectors $u_F$ are defined as in (\ref{eq:PresentationPolytope}). The {\em Milnor number} of $P$ is 
\begin{equation}\nonumber
\mu_P :=n! \vol (P)
\end{equation}
where the volume $\vol (P)$ is normalized such that the volume of the cube is equal to $1$. If $P=\conv (b_1 ,\cdots ,b_r)$, the Milnor number $\mu_P$ is also the global Milnor number of the Laurent polynomial $f(\underline{u})=\sum_{i=1}^r c_i \underline{u}^{b_i}$ for generic complex coefficients $c_i$, that is 
$$\mu_P =\dim_{\cit}\frac{\cit [u_1 , u_1^{-1},\cdots , u_n ,u_n^{-1}]}{(u_1\frac{\partial f}{\partial u_1},\cdots , u_n\frac{\partial f}{\partial u_n})}$$ where $(\cit^* )^n$ is equipped with the coordinates 
$\underline{u}=(u_1 ,\cdots ,u_n )$, $\underline{u}^m :=u_1^{m_1}\cdots u_n^{m_n}$ if $m=(m_1 ,\cdots ,m_n )\in\zit^n$ and $(u_1\frac{\partial f}{\partial u_1},\cdots , u_n\frac{\partial f}{\partial u_n})$ denotes the ideal generated by the partial derivative $u_1\frac{\partial f}{\partial u_1},\cdots , u_n\frac{\partial f}{\partial u_n}$.
See \cite{K} for details.

\subsection{The Newton spectrum of a polytope}
\label{sec:DefSpectrumPolytope}

The main object of this paper is given by Definition \ref{def:SpecPolytope} which is motivated by the description
of the spectrum at infinity of a Laurent polynomial in Kouchnirenko's framework,
see \cite {D}, \cite{D0}, \cite{DoSa1}, \cite{Sab}. 
We briefly recall the construction and we gather the results that we will use.

Let $f(\underline{u})=\sum_{m\in\zit^n} a_m \underline{u}^{m}$ be a Laurent polynomial defined on $(\cit^* )^n$. 
The {\em Newton polytope} $P$ of $f$ is the convex hull of $\supp f :=\{m\in\zit^n , \ a_m \neq 0\}$ in $\rit^n$. We assume that $P$ contains the origin as an interior point, so that $f$ is convenient in the sense of \cite{K}. The Newton function of $P$ defines a filtration on $\mathcal{A}:=\cit [u_1 , u_1^{-1},\cdots , u_n ,u_n^{-1}]$ and, by projection, a filtration $\mathcal{N}_{\bullet}$
on $\mathcal{A}/ ( u_1\frac{\partial f}{\partial u_1},\cdots , u_n\frac{\partial f}{\partial u_n})$, see \cite[Section2]{D}, \cite[Section 4]{DoSa1}.
The {\em spectrum at infinity of } $f$ is a priori a sequence $(\beta_1 ,\cdots , \beta_{\mu_P})$ of nonnegative rational numbers (there may be repeated items), see \cite{Sab}, \cite[Section 2.e]{DoSa1}. We will denote $\Spec_f (z):=\sum_{i=1}^{\mu_P}z^{\beta_i}$.
We first quote the following result (for our purpose, one can take equality (\ref{eq:SpectrumInfinity}) as the definition of the spectrum at infinity of the Laurent polynomial $f$) which holds if $f$ is moreover nondegenerate with respect to $P$ in the sense of \cite[D\'efinition 1.19]{K}:

\begin{theorem}\cite{DoSa1}\label{theo:SpectrumInfinity}
Let $f$ be a convenient and nondegenerate Laurent polynomial defined on $(\cit^* )^n$. Then
\begin{equation}\label{eq:SpectrumInfinity}
\Spec_f (z)=\sum_{\alpha\in\qit}\dim_{\cit} \gr^{\mathcal{N}}_{\alpha}\frac{\mathcal{A}}{(u_1\frac{\partial f}{\partial u_1},\cdots , u_n\frac{\partial f}{\partial u_n})} z^{\alpha}.
\end{equation}
\end{theorem}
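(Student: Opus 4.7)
The plan is to reduce the statement to a comparison between the $V$-filtration (Kashiwara-Malgrange filtration at infinity) on a Gauss-Manin type lattice and the Newton filtration on the Jacobian ring, using the convenient and nondegenerate hypotheses to push both descriptions onto the same finite-dimensional $\cit$-vector space.

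First I would recall Sabbah's construction of the spectrum at infinity: associated with the tame Laurent polynomial $f$ is a Gauss-Manin system, a free $\cit[\theta,\theta^{-1}]$-module of rank $\mu_P$ carrying a pole of order $\le 2$ at $\theta=\infty$, together with a distinguished Brieskorn lattice $G_0$. The spectrum at infinity is then read off from the associated graded of $G_0$ with respect to the $V$-filtration at infinity: $\Spec_f (z)=\sum_{\alpha}\dim_{\cit}\gr^V_{\alpha}(G_0/G_{-1})\,z^{\alpha}$. Thus the theorem amounts to identifying this graded object with the Newton-graded Jacobian ring appearing on the right-hand side of (\ref{eq:SpectrumInfinity}).

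Next I would invoke Kouchnirenko's theorem: under the convenient and nondegenerate hypothesis the Jacobian ring $\mathcal{A}/(u_1\partial_{u_1}f,\dots,u_n\partial_{u_n}f)$ is of dimension exactly $\mu_P$, and the natural map from differential forms $\Omega^n(\mathbb{T})/(df\wedge \Omega^{n-1})$ to this Jacobian ring is an isomorphism. I would then use this to identify $G_0/G_{-1}$, or equivalently the relevant graded piece of the Brieskorn lattice, with the Jacobian ring as a $\cit$-vector space, essentially via the $[\omega] \mapsto [\omega/df\wedge \cdots]$ type correspondence inherent to the Brieskorn construction.

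The heart of the proof, and where I expect the real work to lie, is to show that under this identification the $V$-filtration corresponds exactly to the Newton filtration $\mathcal{N}_\bullet$. The strategy would be to produce an explicit lattice $L_\bullet$ in $G_0$ spanned by monomial differential forms $\underline{u}^m\, du_1 \wedge\cdots\wedge du_n /(u_1\cdots u_n)$ graded by $\nu(m)$, to verify that this lattice is stable under $\theta^2\partial_\theta$ modulo lower-order shifts, and then to invoke the characterization of the $V$-filtration by its eigenvalue/monodromy properties to conclude that $L_\bullet$ agrees with $V_\bullet$. This is a Varchenko-type argument: nondegeneracy ensures that the Newton-principal parts of $u_i\partial_{u_i}f$ generate the toric initial ideal, so the Newton filtration on the Jacobian ring is well defined and its Hilbert-Poincar\'e series equals $(1-z)^n\sum_{v\in N}z^{\nu(v)}$; tameness and convenience ensure that passing from the initial-ideal computation at the toric boundary to the global Brieskorn lattice at infinity does not introduce any spectral shift. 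Assembling these pieces gives the stated equality of formal sums.
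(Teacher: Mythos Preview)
Your proposal is correct and follows essentially the same route as the paper: the paper's proof simply cites \cite[Theorem 4.5]{DoSa1} for the coincidence of the Newton filtration and the Kashiwara--Malgrange ($V$-)filtration on the Brieskorn lattice, together with the definition of the spectrum via $V_\bullet$ from \cite[Section 2.e]{DoSa1}, whereas you have unpacked the Varchenko-type argument behind that citation (constructing the monomial lattice graded by $\nu$, checking stability under $\theta^2\partial_\theta$, and invoking the characterization of $V_\bullet$). The strategy and the key ingredient are the same.
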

\begin{proof}
By \cite[Theorem 4.5]{DoSa1}, the Newton filtration and the Kashiwara-Malgrange filtration on the Brieskorn lattice of $f$ coincide if $f$ is convenient and nondegenerate. The result then follows from the description of the spectrum given in \cite[Section 2.e]{DoSa1}.
\end{proof}

\begin{corollary} \cite{D}, \cite{D0}\label{coro:SpectrumInfinitySpecPolytope}
Let $f$ be a convenient and nondegenerate Laurent polynomial defined on $(\cit^* )^n$. Then
\begin{equation}\label{eq:SpectreNewton}
\Spec_f (z)= (1-z)^n \sum_{v\in N}z^{\nu (v)}
\end{equation}
where $\nu$ is the Newton function of the Newton polytope $P$ of $f$.
\end{corollary}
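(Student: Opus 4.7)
The plan is to combine Theorem \ref{theo:SpectrumInfinity} with a Hilbert--Poincar\'e series computation for the graded Jacobian ring. Setting $J_f := (u_1\frac{\partial f}{\partial u_1},\ldots , u_n\frac{\partial f}{\partial u_n})$, it suffices in view of Theorem \ref{theo:SpectrumInfinity} to establish
$$
\sum_{\alpha\in\qit} \dim_{\cit} \gr^{\mathcal{N}}_\alpha (\mathcal{A}/J_f)\, z^\alpha \;=\; (1-z)^n \sum_{v\in N} z^{\nu(v)}.
$$

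First, I would compute the Hilbert--Poincar\'e series of the associated graded ring $\gr^{\mathcal{N}}\mathcal{A}$. The monomial basis $\{\underline{u}^v\}_{v\in N}$ of $\mathcal{A}$ is adapted to the Newton filtration, each $\underline{u}^v$ having filtration degree exactly $\nu(v)$. Consequently $\gr^{\mathcal{N}}_\alpha \mathcal{A}$ has basis $\{\underline{u}^v : \nu(v)=\alpha\}$ and its Hilbert--Poincar\'e series is exactly $\sum_{v\in N} z^{\nu(v)}$.

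The main step is then to pass to the quotient by $J_f$; this is where the convenience and nondegeneracy hypotheses enter, through Kouchnirenko's theorem \cite{K}. The key assertion I would invoke is that the principal symbols $\sigma(u_i \partial f/\partial u_i)$ with respect to $\mathcal{N}_\bullet$ form a regular sequence of homogeneous elements of $\nu$-degree one in $\gr^{\mathcal{N}}\mathcal{A}$. Indeed, since $\supp f \subset P$ and $\nu$ takes value $1$ at the vertices of $P$, the $\nu$-degree of each $u_i\partial f/\partial u_i$ is at most $1$; convenience guarantees that some vertex of $P$ has a nonzero $i$-th coordinate, so the vertex monomials contribute a nonzero degree-one part. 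Regularity further implies that the initial ideal of $J_f$ coincides with the ideal generated by these principal symbols, so that
$$
\gr^{\mathcal{N}}(\mathcal{A}/J_f) \;\cong\; \gr^{\mathcal{N}}\mathcal{A}\big/\bigl(\sigma(u_1\partial f/\partial u_1),\ldots,\sigma(u_n\partial f/\partial u_n)\bigr).
$$

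Once regularity is in hand, the conclusion is formal: the Koszul resolution of a regular sequence of $n$ homogeneous elements of degree one shows that quotienting multiplies the Hilbert--Poincar\'e series by $(1-z)^n$, which yields the claimed identity. The hard step is the regularity of the principal symbols; this is the substance of Kouchnirenko's theorem and depends essentially on the nondegeneracy hypothesis, so I would simply cite \cite{K} rather than reprove it.
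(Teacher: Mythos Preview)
Your proposal is correct and follows essentially the same route as the paper: both combine Theorem~\ref{theo:SpectrumInfinity} with Kouchnirenko's results in \cite{K}, the paper citing \cite[Th\'eor\`eme~2.8]{K} directly while you unpack its content (regular sequence of degree-one principal symbols, Koszul complex giving the factor $(1-z)^n$). The only point to be slightly careful with is the passage from regularity of the principal symbols to the identification $\gr^{\mathcal N}(\mathcal A/J_f)\cong \gr^{\mathcal N}\mathcal A/(\sigma(u_i\partial f/\partial u_i))$, which needs a short dimension/flatness argument; but this is exactly what Kouchnirenko proves, so your citation covers it.
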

\begin{proof}
Follows from Theorem \ref{theo:SpectrumInfinity} and \cite[Th\'eor\`eme 2.8]{K}, as in \cite[Theorem 3.2]{D}, 
\end{proof}

The right hand side of (\ref{eq:SpectreNewton}) depends only on $P$. This justifies the following definition:

\begin{definition}\label{def:SpecPolytope}
Let $P$ be a full dimensional lattice polytope in $N_{\rit}$, containing the origin as an interior point, and let $\nu$ be its Newton function.
The Newton spectrum of $P$ is 
\begin{equation}\label{eq:DefSpecP}
\Spec_{P}(z):=(1-z)^n \sum_{v\in N} z^{\nu (v)}.
\end{equation}
\end{definition}

\noindent Notice that various versions of the right hand of (\ref{eq:DefSpecP}) appear in many places, see for instance
\cite[Theorem 4.3]{Batyrev} for a relation with stringy $E$-functions.\\

 We have the following geometric description of the Newton spectrum of a polytope $P$, which allows to identify the spectrum at infinity of a convenient and nondegenerate polynomial whose Newton polytope is $P$. Define, for $\sigma$ a cone in the fan $\Sigma_P$ generated by the vertices $b_1 ,\cdots ,b_r$ of $P$,
\begin{equation}\nonumber
\Box (\sigma ):=\{\sum_{i=1}^{r} q_i b_i,\ q_i \in [0,1[,\ i=1,\cdots ,r\}
\end{equation}
and $\Box (\Sigma_P):=\cup_{\sigma\in\Sigma_P}\Box (\sigma )$.

\begin{proposition}\label{prop:DescrSpecP} \cite{D}, \cite{D0}, \cite{Stapledon}
Let $P$ be a full dimensional lattice polytope in $N_{\rit}$, containing the origin as an interior point. Then
\begin{equation}\label{eq:DescrSpecP}
\Spec_{P}(z)=\sum_{v\in \Box (\Sigma_P)\cap N}[\sum_{i=0}^{n-\dim \sigma (v)}\dim H^{2i}(X (\Sigma_P / \sigma (v)), \qit)z^i ]z^{\nu (v)}
\end{equation}
where $X (\Sigma_P / \sigma (v))$ is the toric variety associated with the quotient fan 
$\Sigma_P / \sigma (v)$ and $\sigma (v)$ is the smallest cone containing $v$.
\end{proposition}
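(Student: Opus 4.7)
The plan is to rewrite the generating series $\sum_{v\in N} z^{\nu(v)}$ via a Stanley-type decomposition induced by the simplicial fan $\Sigma_P$, and then to recognize the resulting polynomial factors as Poincar\'e polynomials of simplicial complete toric varieties through the Danilov--Jurkiewicz formula.

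I would start from the partition $N=\bigsqcup_{\tau\in\Sigma_P}(\Int(\tau)\cap N)$ according to the unique cone whose relative interior contains a given lattice point. Fix $\tau\in\Sigma_P$ of dimension $d$, with generators $b_1,\dots,b_d$ among the vertices of $P$. Each $v=\sum_{j=1}^{d}q_j b_j\in \Int(\Box(\tau))\cap N$ (so $q_j\in(0,1)$) corresponds bijectively, for every $\sigma\supseteq\tau$ in $\Sigma_P$ with extra generators $b_{d+1},\dots,b_r$, to the lattice points $x=v+\sum_{j=1}^{d}k_j b_j+\sum_{j=d+1}^{r}\ell_j b_j\in \Int(\sigma)\cap N$ with $k_j\in\nit_{\geq 0}$ and $\ell_j\in\nit_{\geq 1}$; as $\tau$, $\sigma$, $v$ and $(k_j,\ell_j)$ vary, each $x\in N$ is obtained exactly once. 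Since $\nu$ is linear on $\sigma$ with $\nu(b_j)=1$, one has $\nu(x)=\nu(v)+\sum_j k_j+\sum_j \ell_j$, and summing geometric series together with the identification $\Box(\Sigma_P)\cap N=\bigsqcup_{\tau}(\Int(\Box(\tau))\cap N)$ and $\sigma(v)=\tau$ on $\Int(\Box(\tau))$ yields
\begin{equation*}
\sum_{x\in N} z^{\nu(x)}=\sum_{v\in\Box(\Sigma_P)\cap N} z^{\nu(v)} \sum_{\sigma\supseteq \sigma(v)} \frac{z^{\dim\sigma-\dim\sigma(v)}}{(1-z)^{\dim\sigma}}.
\end{equation*}

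Multiplying by $(1-z)^n$ and identifying the cones $\sigma\supseteq\sigma(v)$ of $\Sigma_P$ with the cones $\bar\sigma$ of the quotient fan $\Sigma_P/\sigma(v)$ (so $\dim\bar\sigma=\dim\sigma-\dim\sigma(v)$ and $n-\dim\sigma=(n-\dim\sigma(v))-\dim\bar\sigma$), the inner factor transforms into $\sum_{\bar\sigma\in\Sigma_P/\sigma(v)} z^{\dim\bar\sigma}(1-z)^{(n-\dim\sigma(v))-\dim\bar\sigma}$. This is precisely the Danilov--Jurkiewicz expression for the Poincar\'e polynomial of the simplicial complete toric variety $X(\Sigma_P/\sigma(v))$, namely $\sum_{i=0}^{n-\dim\sigma(v)} \dim H^{2i}(X(\Sigma_P/\sigma(v)),\qit)\,z^i$, and this produces (\ref{eq:DescrSpecP}).

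The step I expect to require the most care is the Stanley-type bijection in the second paragraph: one must verify that the condition $\ell_j\geq 1$ on the generators of $\sigma$ outside $\sigma(v)$ is exactly what distinguishes $\sigma(x)=\sigma$ from $\sigma(x)$ being a proper face, so that the decomposition is neither redundant nor incomplete, and that the box representative $v$ lives in the relative interior of $\Box(\sigma(v))$ (i.e.\ all $q_j$ are strictly positive). Once this bookkeeping is pinned down, the remainder is a direct geometric-series computation followed by an invocation of the standard Betti-number formula for complete simplicial toric varieties.
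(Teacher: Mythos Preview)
Your argument is correct and is essentially an explicit unpacking of the paper's proof, which merely cites \cite[Proposition~2.6]{K} (the Stanley-type decomposition of the lattice points in a simplicial cone over its box) and \cite[Proposition~3.3]{D} (the identification of the resulting $h$-polynomial with the Betti numbers via the Danilov--Jurkiewicz formula for complete simplicial toric varieties). The bookkeeping you flag---that $v$ lies in $\Int(\Box(\sigma(v)))$ and that the condition $\ell_j\geq 1$ singles out exactly the points with $\sigma(x)=\sigma$---is precisely what makes the decomposition a bijection, and it goes through as you indicate.
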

\begin{proof}
Follows from \cite[Proposition 2.6]{K}, as in \cite[Proposition 3.3]{D}. See also \cite[Theorem 4.3]{Stapledon} for another proof in a slightly different context.
\end{proof}

\begin{remark}\label{rem:ConventionSpectre}
We will write $\Spec_P (z):=\sum_{i=1}^{\mu_P} z^{\beta_i}$ where $(\beta_1 ,\cdots \beta_{\mu_P})$ is a sequence of nonnegative rational numbers (counted with multiplicities) and we will freely identify the Newton spectrum of $P$ with the sequence $(\beta_1 ,\cdots \beta_{\mu_P})$. 
\end{remark}

We will make a repeated use of the following properties. In what follows, $\Inter P$ denotes the interior of $P$, $\partial P $ denotes its boundary and, for  $I\subset\rit$, $\Spec_P^{I} (z):=\sum_{\beta_i \in I} z^{\beta_i}$.

\begin{proposition}\cite{D}, \cite{D0}, \cite{DoSa1} 
\label{prop:PropertiesSpec}
Let $P$ be a full dimensional polytope in $N_{\rit}$  containing the origin in its interior and let $\nu$ be its Newton function. Then,
\begin{enumerate}
\item $\lim_{z\rightarrow 1} \Spec_P (z) =\mu_P$,
\item $\Spec_{P}^{[0,1[} (z)=\sum_{v\in \Inter P \cap N}z^{\nu (v)}$,
\item the multiplicity of $1$ in $\Spec_{P}(z)$ is $\card (\partial P\cap N)-n$,
\item $z^n \Spec_P (z^{-1})=\Spec_P (z)$.
\end{enumerate}
\end{proposition}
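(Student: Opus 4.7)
The plan is to deduce all four properties from the geometric description \eqref{eq:DescrSpecP} of Proposition~\ref{prop:DescrSpecP}, written in the form
\[
\Spec_P(z)=\sum_{v\in\Box(\Sigma_P)\cap N}h_v(z)\,z^{\nu(v)},\qquad h_v(z):=\sum_{i=0}^{n-\dim\sigma(v)}\dim H^{2i}(X(\Sigma_P/\sigma(v)),\qit)\,z^i.
\]
Three inputs will be used repeatedly: (a) Poincar\'e duality on each complete simplicial toric variety $X(\Sigma_P/\sigma(v))$; (b) the equality $|\Box(\tau)\cap N|=n!\,\vol(\conv(0,b_{i_1},\dots,b_{i_n}))$ for a simplicial maximal cone $\tau$ with generators $b_{i_1},\dots,b_{i_n}$; (c) the piecewise-linear equivalences $v\in\Inter P\Leftrightarrow\nu(v)<1$ and $v\in\partial P\Leftrightarrow\nu(v)=1$.

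For (1) I set $z=1$: $h_v(1)$ is the total rational Betti number of $X(\Sigma_P/\sigma(v))$, equal to the number of maximal cones of the quotient fan, hence to the number of maximal cones of $\Sigma_P$ containing $\sigma(v)$. Swapping the order of summation and decomposing $\Box(\tau)\cap N=\bigsqcup_{\sigma\leq\tau}\{v\in\Box(\Sigma_P)\cap N:\sigma(v)=\sigma\}$ over the faces $\sigma$ of the maximal cone $\tau$ yields
\[
\Spec_P(1)=\sum_{\tau\text{ maximal}}|\Box(\tau)\cap N|;
\]
the maximal cones of $\Sigma_P$ are in bijection with the facets of $P$, so summing~(b) over them returns $n!\,\vol(P)=\mu_P$.

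For (2) and (3) I inspect which pairs $(v,i)$ contribute to the prescribed ranges of exponents. Because $i\geq 0$ and $\nu(v)\geq 0$, an exponent $i+\nu(v)<1$ forces $i=0$ and $\nu(v)<1$, with coefficient $\dim H^0=1$. By~(c), $\{v\in\Box(\Sigma_P)\cap N:\nu(v)<1\}=\Inter P\cap N$: any $v\in\Inter P\cap N$ sits in the relative interior of its smallest cone $\sigma(v)$, writing uniquely $v=\sum q_jb_{i_j}$ with $q_j>0$ and $\sum q_j=\nu(v)<1$, so $q_j<1$ and $v\in\Box(\sigma(v))$. This is (2). For (3), the exponent $1$ arises either with $i=0$ and $\nu(v)=1$, which selects the lattice points of $\partial P$ that are not vertices (vertices correspond to a coefficient $1\notin[0,1)$ and therefore lie outside $\Box(\Sigma_P)$) and contributes $|\partial P\cap N|-\#\{\text{vertices of }P\}$, or with $i=1$ and $v=0$ (the only $v$ with $\nu(v)=0$), contributing $\dim H^2(X(\Sigma_P),\qit)=\#\{\text{rays of }\Sigma_P\}-n=\#\{\text{vertices of }P\}-n$, a standard computation for complete simplicial toric varieties. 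The two counts telescope to $|\partial P\cap N|-n$.

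For (4), Poincar\'e duality on $X(\Sigma_P/\sigma(v))$ (of rational dimension $n-\dim\sigma(v)$) yields $z^{n-\dim\sigma(v)}h_v(z^{-1})=h_v(z)$, so
\[
z^n\Spec_P(z^{-1})=\sum_{v\in\Box(\Sigma_P)\cap N}h_v(z)\,z^{\dim\sigma(v)-\nu(v)}.
\]
The involution $v\mapsto v^*:=\sum_jb_{i_j}-v$ on $\Box(\sigma)\cap N$, where the $b_{i_j}$ generate $\sigma(v)$, preserves $\sigma(v)$ (hence $h_{v^*}=h_v$) and satisfies $\nu(v^*)=\dim\sigma(v)-\nu(v)$; relabelling $v\leftrightarrow v^*$ rewrites the right-hand side as $\Spec_P(z)$. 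The main bookkeeping obstacle will be (1) together with the $\dim H^2$ part of (3): one must correctly identify $\sum_\tau|\Box(\tau)\cap N|$ with $n!\,\vol(P)$, and must notice that the vertices of $P$ lie outside $\Box(\Sigma_P)$, so that their count cancels cleanly against $\dim H^2(X(\Sigma_P))=\#\{\text{vertices of }P\}-n$.
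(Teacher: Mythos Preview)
Your argument is correct. Each of the four items is deduced cleanly from the geometric description in Proposition~\ref{prop:DescrSpecP}: the counting in~(1) via the Euler characteristic of the quotient toric varieties, the exponent bookkeeping in~(2) and~(3), and the Poincar\'e-duality/involution argument in~(4) all go through as you wrote them. The one place to be a little careful is the involution in~(4): you should remark that for $v=0$ the minimal cone is $\{0\}$, the sum $\sum_j b_{i_j}$ is empty, and $0^*=0$, so the fixed point causes no trouble; and that $v^*\in N$ because each $b_{i_j}\in N$. Both are implicit in what you wrote but worth making explicit.

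The paper itself does not give a self-contained proof here: it simply cites \cite[Corollary~3.5]{D} and remarks that items~1,~2 and~4 are in \cite{DoSa1}, ultimately resting on Kouchnirenko's results and on singularity-theoretic properties of the spectrum (e.g.\ the symmetry in~(4) is viewed as the classical symmetry of a singularity spectrum coming from Hodge theory on the Brieskorn lattice). Your route is genuinely different: you stay entirely on the combinatorial side, taking Proposition~\ref{prop:DescrSpecP} as input and using only elementary toric facts (Betti numbers of complete simplicial toric varieties, the lattice-point count of a fundamental parallelepiped). What you gain is a proof that is self-contained modulo Proposition~\ref{prop:DescrSpecP} and that makes the polytope side of the story transparent; what the paper's citations buy is a connection to the broader singularity-theory context, where properties like the symmetry~(4) hold for spectra of tame functions in general, not just in the convenient--nondegenerate setting.
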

\noindent See \cite[Corollary 3.5]{D}. Item 1, 2 and 4 are already contained in \cite{DoSa1} and are based on Kouchnirenko's results \cite{K}. The last property means that $\Spec_{P}$ is symmetric about $\frac{n}{2}$ and is a classical specification of a singularity spectrum. In a different setting, these statements can also be found in \cite{Stapledon}.

\section{The Newton spectrum of a reduced simplex}

\label{sec:EPP}

We keep the terminology of \cite{Conrads} and we will rather denote a simplex by $\Delta$ instead of $P$.
We will say that the polytope $\Delta :=\conv (v_0 ,\cdots ,v_n )$ is an {\em integral $\zit^n$-simplex} if its vertices $v_i$ belong to the lattice $\zit^n$ and if it contains the origin as an interior point. 

\begin{definition}
Let $\Delta :=\conv (v_0 ,\cdots ,v_n )$ be an integral $\zit^n$-simplex.
The {\em weight} of $\Delta$ is
the tuple $Q(\Delta )=(q_0 ,\cdots , q_n )$ where 
\begin{equation}\label{eq:Defqi}
q_i := |\det (v_0 ,\cdots , \widehat{v_i},\cdots , v_n )|
\end{equation}
for $i=0,\cdots ,n$. 
The simplex $\Delta$ is said to be {\em reduced} if $\gcd (q_0 ,\cdots ,q_n )=1$.
\end{definition}

\begin{proposition}\label{prop:SimplexGC}
Let $\Delta :=\conv (v_0 ,\cdots ,v_n )$ be an integral $\zit^n$-simplex and let $Q(\Delta )=(q_0 ,\cdots , q_n )$ be its weight. Then,
\begin{enumerate}
\item $\mu_{\Delta}=\sum_{i=0}^n  q_{i}$ where $\mu_{\Delta}$ is the Milnor number of $\Delta$,
\item $\sum_{i=0}^n q_i v_i =0$,
\item $(v_0 ,\cdots ,v_n )$ generate $N$ if and only if $\Delta$ is reduced.
\end{enumerate}
\end{proposition}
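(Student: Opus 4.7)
All three claims follow from elementary linear algebra, with the interior-point hypothesis $0\in\Inter\Delta$ serving as the essential positivity input. I would treat them in order.

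For (1), the strategy is to triangulate $\Delta$ from the origin. Because $0\in\Inter\Delta$, the simplex decomposes as $\Delta = \bigcup_{i=0}^n \Delta_i$ with $\Delta_i :=\conv(0,v_0,\ldots,\widehat{v_i},\ldots,v_n)$, and standard stellar-subdivision considerations show that these sub-simplices have pairwise disjoint interiors. Each $\Delta_i$ satisfies $n!\,\vol(\Delta_i)=q_i$ directly from (\ref{eq:Defqi}), so summing yields $\mu_\Delta = \sum_{i=0}^n q_i$.

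For (2), I would compute the (one-dimensional) kernel of the $n\times(n+1)$ matrix $V=[v_0|\cdots|v_n]$ in two different ways. Duplicating any row of $V$ inside an $(n+1)\times(n+1)$ zero determinant and expanding along the duplicated row yields the alternating cofactor identity
$$\sum_{i=0}^n (-1)^i \det(v_0,\ldots,\widehat{v_i},\ldots,v_n)\, v_i = 0.$$
On the other hand, $0\in\Inter\Delta$ provides a strictly positive relation $\sum_i \lambda_i v_i = 0$ with $\lambda_i>0$. Since $V$ has a one-dimensional kernel (the $v_i$'s span $N_\rit$ because the origin is interior), these two relations must be proportional, forcing the signed minors $(-1)^i\det(v_0,\ldots,\widehat{v_i},\ldots,v_n)$ to share a common sign $\epsilon\in\{\pm 1\}$. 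Multiplying the alternating identity by $\epsilon$ then converts it into $\sum_{i=0}^n q_i v_i = 0$.

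For (3), I would invoke the Smith-normal-form criterion: the columns of an integer $n\times(n+1)$ matrix generate $\zit^n$ if and only if the $\gcd$ of its maximal minors equals $1$. Here those minors are precisely $\pm q_0,\ldots,\pm q_n$, so surjectivity is equivalent to $\gcd(q_0,\ldots,q_n)=1$, i.e., to $\Delta$ being reduced. The main obstacle is the sign-tracking in (2): without $0\in\Inter\Delta$ the alternating cofactor relation need not be convertible into an all-positive one, and this is where the interior-point hypothesis plays an essential (rather than cosmetic) role. Parts (1) and (3) are otherwise routine once the triangulation and Smith-normal-form facts are in hand.
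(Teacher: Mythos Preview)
Your proposal is correct and follows essentially the same route as the paper: volume decomposition for (1), the cofactor/Cramer relation for (2), and the index-equals-gcd-of-minors fact for (3). The only cosmetic differences are that you spell out the sign argument in (2) explicitly (the paper just says ``Cramer's rule'') and invoke Smith normal form directly for (3) where the paper cites \cite[Lemma 2.4]{Conrads} for the same statement.
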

\begin{proof} The first point follows from the definition of the $q_i$'s and
the second one follows from Cramer's rule. For the third one notice first that, because $q_i >0$, the submodule generated by $(v_0 ,\cdots , \widehat{v_i},\cdots , v_n )$ is free of rank $n$, hence the module $N_{\Delta}$ generated by $(v_0 ,\cdots , v_n )$ is free of rank $n$. By \cite[Lemma 2.4]{Conrads}
we have $\det N_{\Delta}=\gcd (q_0 ,\cdots , q_n )$ and this is the index of $N_{\Delta}$ in $N$ so that $N_{\Delta} =N$ if and only if $\det N_{\Delta}=1$.
\end{proof}

Let $\Delta$ be a $\zit^n$-integral simplex and let $Q(\Delta )=(q_0 ,\cdots ,q_n )$ be its weight. 
Let   
$$F:=\left\{\frac{\ell}{q_{i}}|\, 0\leq\ell\leq q_{i}-1,\ 0\leq i\leq n\right\}$$
and let $f_{1},\cdots , f_{k}$ be the elements of $F$ arranged by increasing order.
Define      
\begin{align}\nonumber
  S_{f_i}:=\{j|\ q_{j}f_i \in\zit\}\subset \{0,\cdots ,n\}\ \mbox{and}\
d_{i}:=\card S_{f_{i}}
\end{align}
and let $c_{0},c_{1},\cdots , c_{\mu_{\Delta} -1}$ be the sequence
$$\underbrace{f_{1},\cdots ,f_{1}}_{d_{1}},\underbrace{f_{2},\cdots ,f_{2}}_{d_{2}},\cdots ,\underbrace{f_{k},\cdots ,f_{k}}_{d_{k}}.$$

\begin{theorem} \label{theo:specEPP}  
Let $\Delta$ be a $\zit^n$-integral simplex and let $Q(\Delta )=(q_0 ,\cdots ,q_n )$ be its weight. Assume that $\Delta$ is reduced. 
Then the Newton spectrum of $\Delta$ is 
$$\Spec_{\Delta}(z)=z^{\alpha_0}+z^{\alpha_1}+\cdots +z^{\alpha_{\mu_{\Delta} -1}}$$
where 
$$\alpha_{k}:=k-\mu_{\Delta}  c_{k}$$
for $k=0,\cdots ,\mu_{\Delta}-1$. 
\end{theorem}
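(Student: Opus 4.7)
The plan is to reduce to a spectrum computation for an explicit Laurent polynomial and then apply \cite[Theorem 1]{DoSa2}. By Corollary \ref{coro:SpectrumInfinitySpecPolytope}, $\Spec_{\Delta}(z) = \Spec_{f}(z)$ for any convenient and nondegenerate Laurent polynomial $f$ with Newton polytope $\Delta$, so I take $f(u) := \sum_{i=0}^{n} c_{i} u^{v_{i}}$ on $(\cit^{*})^{n}$ with generic $c_{i} \in \cit^{*}$: this $f$ is convenient because $0 \in \Int \Delta$ and nondegenerate for generic coefficients. Reducedness of $\Delta$ now puts $f$ into quasi-homogeneous form: Proposition \ref{prop:SimplexGC}(2)--(3) give the relation $\sum_{i} q_{i} v_{i} = 0$ and say that $v_{0}, \ldots, v_{n}$ generate $N$, so the monomial map $\phi \colon (\cit^{*})^{n} \to (\cit^{*})^{n+1}$, $u \mapsto (u^{v_{0}}, \ldots, u^{v_{n}})$, is an isomorphism onto the codimension-one subtorus $T' = \{x \in (\cit^{*})^{n+1} : \prod_{i} x_{i}^{q_{i}} = 1\}$, and under $\phi$ the polynomial $f$ becomes the restriction to $T'$ of the linear form $g(x) = \sum_{i} c_{i} x_{i}$. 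This places $f$ in the quasi-homogeneous framework treated in \cite[Theorem 1]{DoSa2}, with weights $(q_{0}, \ldots, q_{n})$.

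Next, apply \cite[Theorem 1]{DoSa2}, which supplies the spectrum at infinity of such a quasi-homogeneous Laurent polynomial as a closed expression in the weights $q_{j}$. The spectral numbers it produces are naturally indexed by pairs $(j, \ell)$ with $0 \leq j \leq n$ and $0 \leq \ell < q_{j}$ and depend on $(j, \ell)$ only through the fractional value $\ell / q_{j} \in F$; hence each $f_{i} \in F$ is attained by exactly $d_{i} = \card S_{f_{i}}$ such pairs. Listing the distinct elements of $F$ in increasing order, $f_{1} < \cdots < f_{k}$, and repeating each $f_{i}$ with multiplicity $d_{i}$, produces the sequence $(c_{k})_{k=0}^{\mu_{\Delta}-1}$ of the statement; matching indices to the output of \cite[Theorem 1]{DoSa2} yields precisely $\alpha_{k} = k - \mu_{\Delta} c_{k}$.

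The hard part is essentially packaged into \cite[Theorem 1]{DoSa2} itself, whose proof relies on the compatibility between the Newton filtration and the Kashiwara--Malgrange $V$-filtration on the Brieskorn lattice of $f$ in the quasi-homogeneous case. Once that theorem is granted, what remains here is bookkeeping: the total count $\sum_{i} d_{i} = \sum_{j} q_{j} = \mu_{\Delta}$ agrees with Proposition \ref{prop:PropertiesSpec}(1), and the symmetry $\alpha \leftrightarrow n - \alpha$ from Proposition \ref{prop:PropertiesSpec}(4) corresponds to the involution $\ell \mapsto q_{j} - \ell$ on the indexing pairs, confirming that the rearrangement $\alpha_{k} = k - \mu_{\Delta} c_{k}$ is the correct way to package the output of \cite[Theorem 1]{DoSa2}.
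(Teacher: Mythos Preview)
Your proof is correct and follows essentially the same route as the paper's: both invoke Proposition~\ref{prop:SimplexGC} to obtain the relation $\sum_i q_i v_i = 0$ and the fact that the $v_i$ generate $\zit^n$, then recognize that this places the associated Laurent polynomial in the setting of \cite[Theorem~1]{DoSa2}, from which the formula follows. Your monomial-map description of the subtorus $T'$ is simply the geometric translation of the short exact sequence $0 \to \zit \to \zit^{n+1} \to \zit^n \to 0$ that the paper writes down, and the extra bookkeeping you include (total count, symmetry) is a helpful sanity check but not logically required.
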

\begin{proof}
The vertices $v_0 ,\cdots , v_n$ of $\Delta$ satisfy $\sum_{i=0}^n q_i v_i =0$ and they generate $\zit^n$ by 
Proposition \ref{prop:SimplexGC} because $\gcd (q_0 ,\cdots , q_n )=1$. Thus, they define the exact sequence
$$0\longrightarrow \zit \longrightarrow \zit^{n+1} \longrightarrow \zit^n \longrightarrow 0$$
where the map on the right is defined by $\psi (e_i) =v_i$ for $i=0,\cdots ,n$ (we denote here by $(e_0,\cdots , e_n)$ the canonical basis of $\zit^{n+1}$) and the map on the left is defined by $\phi (1)=(q_0 ,\cdots ,q_n )$. It follows that the simplex $\Delta$ is the Newton polytope of the convenient and nondegenerate Laurent polynomial considered in \cite{DoSa2}. Thus, the assertion follows from the definition of the Newton spectrum of a polytope given in Section \ref{sec:DefSpectrumPolytope} and \cite[Theorem 1]{DoSa2}.   
\end{proof}

\begin{remark} The $\alpha_k$'s defined in Theorem \ref{theo:specEPP}   are not necessarily distinct. 
For instance, let us consider $\Delta =\conv ( (1,0), (0,1), (-1,-2))$
in $\rit^2$. We have $Q(\Delta )= (1,1,2)$, $\mu_{\Delta} =4$ and the sequence $\alpha_0 ,\alpha_1 ,\alpha_2 , \alpha_{3}$ is $0,1,2,1$. 
\end{remark}

\begin{example}\label{ex:ExSpecDim4}               
Let us consider 
$$\Delta =\conv ( (1,0, 0,0), (0,1,0,0), (0,0,1,0), (0,0,0,1),  (-1,-1,-1,-5))$$
in $\rit^4$. We have $Q(\Delta )= (1,1,1,1,5)$ and $\mu_{\Delta} =9$.
The sequence $c_{0},c_{1},\cdots , c_{8}$ is 
$$0,0,0,0,0, \frac{1}{5}, \frac{2}{5}, \frac{3}{5}, \frac{4}{5}$$
 and the sequence $\alpha_0 ,\alpha_1 ,\cdots , \alpha_{8}$ is 
$$0,1,2,3,4, 5-\frac{9}{5}, 6-\frac{18}{5}, 7- \frac{27}{5}, 8- \frac{36}{5}.$$
Theorem \ref{theo:specEPP} provides $\Spec_{\Delta} (z) =1+z+z^2 +z^3 +z^4 +z^{16/5}+z^{12/5}+z^{8/5}+z^{4/5}$.
\end{example}

\begin{remark} Theorem \ref{theo:specEPP} is not true if we do not assume that $\Delta$ is reduced:        
for instance, let $\Delta =\conv ( (2,0), (0,2), (2,2))$
in $\rit^2$. We have $Q(\Delta )= (4,4,4)$ and $\Delta$ is not reduced. By Proposition \ref{prop:PropertiesSpec} we have 
$$\Spec_{\Delta} (z) =1+3z^{1/2}+4z +3z^{3/2}+z^{2}$$
but Theorem \ref{theo:specEPP} would give $4+4z+4z^2$.
\end{remark}

\section{The spectrum as a weighted $\delta$-vector and application to the computation of (weighted) Ehrhart polynomials}

In the first part of this section, we show that the Newton spectrum of a polytope counts weighted lattice points and in the second part we write down an algorithm in order to compute Ehrhart polynomials of reduced simplices.

\subsection{The Newton spectrum of a polytope as a weighted $\delta$-vector}

\label{sec:EhrAndSpectrum}

Once the spectrum at infinity of a Laurent polynomial is identified by the formula (\ref{eq:SpectreNewton}), the results of this subsection can be basically found in \cite{Stapledon}. For the convenience of the reader we give self-contained proofs. For the background about Ehrhart theory we refer to the book \cite{BeckRobbins}.

Let $P$ be a full dimensional lattice polytope in $\rit^n$ and define, for a nonnegative integer $\ell$, $L_P (\ell ):= \card ( (\ell P )\cap M)$. Then $L_P$ is a polynomial in $\ell$ of degree $n$ (this is the {\em Ehrhart polynomial}) and we have
\begin{equation}\label{eq:serie Ehrhart}
\Ehr_P (z):=1+\sum_{m\geq 1}L_P (m) z^m =\frac{\delta_0 +\delta_1 z +\cdots +\delta_n z^n}{(1-z)^{n+1}}
\end{equation}
where the $\delta_j$'s are nonnegative integers. We will call $\Ehr_P (z)$ the {\em Ehrhart series} of $P$. 
We will write $\delta_P (z) :=\delta_0 +\delta_1 z +\cdots +\delta_n z^n$. With a slight abuse of terminology, $\delta_P (z)$ is called the {\em $\delta$-vector} of the polytope $P$. 
The Ehrhart polynomial of a polytope is extract from its $\delta$-vector by the means of the formula
\begin{equation}\label{eq:EhrBinom}
L_P (z)=\delta_0\binom{z+n}{n}+\delta_1\binom {z+n-1}{n} +\cdots +\delta_{n-1}
\binom {z+1}{n}+\delta_n \binom {z}{n}.
\end{equation}

Following \cite{Stapledon}, one defines a weighted version of the $\delta$-vectors. Let $P$ be  full dimensional lattice polytope in $\rit^n$, containing the origin as an interior point, and let $\nu$ be its Newton function.
The {\em weight} of $v\in N$ is $\wt (v):= \nu (v) -\lceil \nu (v)\rceil $ where $\lceil \ \rceil$ denotes the ceiling function.
For $m\in \nit$, let $L^{\alpha}_P (m)$ be the number of lattice points in $mP$ of weight $\alpha$ and define 
$$\delta_P^{\alpha} (z):=(1-z)^{n+1} \sum_{m\geq 0}L^{\alpha}_P (m) z^m .$$
By the very definition, we have $\delta_P (z)=\sum_{\alpha\in ]-1,0]} \delta_P^{\alpha} (z)$.

\begin{definition} The {\em weighted $\delta$-vector} of the polytope $P$ is $\delta_P^{\wt} (z):=\sum_{\alpha\in ]-1,0]} \delta_P^{\alpha} (z) z^{\alpha}$.
\end{definition}

\begin{theorem} \label{theo:SpecvsDeltatwist}
Let $P$ be a full dimensional lattice polytope containing the origin as a interior point. Then,
\begin{equation}\nonumber
\Spec_P (z)=\delta_P^{\wt} (z).
\end{equation}
In particular, the spectrum at infinity $\Spec_f (z)$ of a convenient and nondegenerate Laurent polynomial $f$ is 
equal to the weighted $\delta$-vector $\delta_P^{\wt} (z)$.
\end{theorem}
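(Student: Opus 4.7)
The plan is to reorganize the series $\sum_{v\in N}z^{\nu(v)}$ according to the weight of the lattice points, recognize the dilate-counting generating functions for $L_P^{\alpha}$, and then cancel a factor of $(1-z)$.

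First I would decompose each $\nu(v)$ as $\nu(v)=\lceil\nu(v)\rceil+\wt(v)$, where $\wt(v)\in{}]-1,0]$. Since $P$ contains the origin in its interior, $\nu\geq 0$ on $N$ and $\nu$ takes values in a discrete rational subset of $\rit_{\geq 0}$, so only finitely many weights $\alpha$ occur. Grouping lattice points by $\alpha=\wt(v)$ gives
\begin{equation}\nonumber
\sum_{v\in N}z^{\nu(v)}=\sum_{\alpha\in{}]-1,0]}z^{\alpha}\sum_{v:\,\wt(v)=\alpha}z^{\lceil\nu(v)\rceil}.
\end{equation}

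Second, for a fixed $\alpha$ I would observe that $v\in mP$ iff $\nu(v)\leq m$, which (because $\wt(v)\leq 0$) is equivalent to $\lceil\nu(v)\rceil\leq m$. Consequently $L_P^{\alpha}(m)=\#\{v\in N:\wt(v)=\alpha,\ \lceil\nu(v)\rceil\leq m\}$, and setting $a_k^{\alpha}=L_P^{\alpha}(k)-L_P^{\alpha}(k-1)$ (with $L_P^{\alpha}(-1):=0$) gives
\begin{equation}\nonumber
\sum_{v:\,\wt(v)=\alpha}z^{\lceil\nu(v)\rceil}=\sum_{k\geq 0}a_k^{\alpha}z^k=(1-z)\sum_{m\geq 0}L_P^{\alpha}(m)z^m.
\end{equation}

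Third, combining the two displays and multiplying by $(1-z)^n$ yields
\begin{equation}\nonumber
\Spec_P(z)=(1-z)^n\sum_{v\in N}z^{\nu(v)}=\sum_{\alpha\in{}]-1,0]}z^{\alpha}(1-z)^{n+1}\sum_{m\geq 0}L_P^{\alpha}(m)z^m=\sum_{\alpha\in{}]-1,0]}\delta_P^{\alpha}(z)z^{\alpha}=\delta_P^{\wt}(z),
\end{equation}
and the second assertion then follows from Corollary \ref{coro:SpectrumInfinitySpecPolytope}.

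The substantive content is very modest: the only genuine point to check is the equivalence $v\in mP\Leftrightarrow\lceil\nu(v)\rceil\leq m$, which follows from the piecewise-linear description of $\nu$ and the fact that $mP$ is cut out by $\langle u_F,\cdot\rangle\leq m$. The main obstacle I would expect is bookkeeping rather than mathematical: ensuring the sum over $\alpha$ is finite (so that the rearrangement is legitimate), verifying that $\nu\geq 0$ on $N$ (so that all exponents are nonnegative and the series are formally well-defined), and making sure the telescoping in Step 2 correctly handles the $m=0$ term. Once these points are settled, the identity is a one-line reorganization of the defining series.
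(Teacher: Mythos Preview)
Your proof is correct and follows essentially the same approach as the paper's: both hinge on the equivalence $v\in mP\Leftrightarrow\nu(v)\leq m\Leftrightarrow\lceil\nu(v)\rceil\leq m$ and then sum the resulting geometric series to cancel one factor of $(1-z)$. The paper runs the chain of equalities from $\delta_P^{\wt}(z)$ to $\Spec_P(z)$ without explicitly grouping by weight, while you go in the reverse direction and make the decomposition by $\alpha$ explicit, but the underlying manipulation is identical.
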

\begin{proof}
Because $v\in mP\cap N$ if and only if $\nu (v)\leq m$, we get
$$\delta_P^{\wt} (z)=(1-z)^{n+1} \sum_{m\geq 0}(\sum_{v\in mP}z^{wt (v)})z^m =
(1-z)^{n+1} \sum_{m\geq 0}(\sum_{\nu (v)\leq m}z^{\nu (v)-\lceil v\rceil})z^m $$
$$
=(1-z)^{n+1} \sum_{m\geq 0}(\sum_{\lceil \nu (v)\rceil\leq m}z^{\nu (v)-\lceil v\rceil})z^m 
=
(1-z)^{n+1} \sum_{v\in N}(\sum_{\lceil \nu (v)\rceil\leq m}z^{m-\lceil v\rceil})z^{\nu (v)} $$
$$=(1-z)^{n} \sum_{v\in N}z^{\nu (v)}=\Spec_P (z).$$ 
The last assertion follows from (\ref{eq:SpectreNewton}) and (\ref{eq:DefSpecP}).
\end{proof}

\noindent To sum up, the spectrum at infinity of a Laurent polynomial counts weighted lattice points in its Newton polytope.

\begin{corollary} \label{coro:DescrDeltaAlpha} 
Let $\alpha \in ]-1,0]$ such that $\delta^{\alpha}_P (z)$ is not identically equal to zero. Then, with the notations of Proposition \ref{prop:DescrSpecP}, the rational number
$\alpha$ is equal to $\nu (v)-\lceil \nu (v)\rceil$ for a suitable $v\in \Box (\Sigma_P)\cap N$ and
\begin{equation}\label{eq:DescrDeltaAlpha}
\delta^{\alpha}_P (z)=c_v^{0} z^{\lceil \nu (v)\rceil}+\cdots +c_v^{n-\dim \sigma (v)} z^{\lceil \nu (v)\rceil+n-\dim \sigma (v)}
\end{equation}
where $c_v^i :=\dim H^{2i}(X (\Delta/ \sigma (v)), \qit)$.
\end{corollary}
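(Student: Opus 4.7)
The plan is to derive (\ref{eq:DescrDeltaAlpha}) by confronting the two descriptions of $\Spec_P(z)$ now available: the combinatorial one
$$\Spec_P(z)=\sum_{\alpha\in ]-1,0]}\delta_P^{\alpha}(z)\, z^{\alpha}$$
coming from Theorem \ref{theo:SpecvsDeltatwist}, and the cohomological one
$$\Spec_P(z)=\sum_{v\in\Box(\Sigma_P)\cap N}\sum_{i=0}^{n-\dim\sigma(v)} c_v^i\, z^{i+\nu(v)}$$
coming from Proposition \ref{prop:DescrSpecP}. The strategy is to regroup the second expression according to the fractional parts of the exponents $i+\nu(v)$, and then match the result against the first.

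For each $v\in\Box(\Sigma_P)\cap N$ I would set $\alpha_v:=\nu(v)-\lceil\nu(v)\rceil$. Writing $v=\sum_j q_j b_j$ with $q_j\in[0,1[$ and $b_j$ the vertices of $\sigma(v)$, linearity of $\nu$ on $\sigma(v)$ together with $\nu(b_j)=1$ gives $\nu(v)=\sum_j q_j\in[0,\dim\sigma(v)[$, so $\alpha_v\in ]-1,0]$ (with $\alpha_v=0$ precisely when $\nu(v)\in\zit$). Since $i\in\zit$ we have $\lceil i+\nu(v)\rceil=i+\lceil\nu(v)\rceil$, so each monomial contributed by Proposition \ref{prop:DescrSpecP} splits uniquely as $z^{i+\nu(v)}=z^{i+\lceil\nu(v)\rceil}\cdot z^{\alpha_v}$. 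Collecting the terms according to their fractional shift then yields
$$\Spec_P(z)=\sum_{\alpha\in ]-1,0]}\Bigl[\,\sum_{\substack{v\in\Box(\Sigma_P)\cap N\\ \alpha_v=\alpha}}\sum_{i=0}^{n-\dim\sigma(v)} c_v^i\, z^{\lceil\nu(v)\rceil+i}\Bigr]\,z^{\alpha}.$$

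Comparison with the first expression is the concluding step. Every $s\in\qit$ decomposes uniquely as $s=m+\alpha$ with $m\in\zit$ and $\alpha\in ]-1,0]$, so the monomials $\{z^{\alpha}\}_{\alpha\in ]-1,0]\cap\qit}$ are linearly independent over $\qit[z,z^{-1}]$; this forces $\delta_P^{\alpha}(z)$ to equal the bracketed polynomial above for each $\alpha$. In particular $\delta_P^{\alpha}\not\equiv 0$ implies that at least one $v\in\Box(\Sigma_P)\cap N$ satisfies $\alpha_v=\alpha$, and the contribution of each such $v$ is exactly the polynomial displayed on the right of (\ref{eq:DescrDeltaAlpha}) (if several $v$'s share the same fractional part $\alpha_v=\alpha$, their contributions add). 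No serious obstacle is expected: the argument is essentially bookkeeping of integer and fractional parts in the exponents of Proposition \ref{prop:DescrSpecP}, the only delicate point being the verification that $\alpha_v$ lands in $]-1,0]$, which is handled by the cone decomposition of $v$ recalled above.
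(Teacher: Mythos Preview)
Your argument is correct and is precisely the approach the paper takes: the one-line proof in the paper simply says ``Follows from Proposition \ref{prop:DescrSpecP} and Theorem \ref{theo:SpecvsDeltatwist},'' and you have unpacked exactly that comparison. Your parenthetical remark that several $v\in\Box(\Sigma_P)\cap N$ may share the same fractional part $\alpha_v=\alpha$ (so that their contributions must be summed) is a useful clarification of the statement itself, which as written displays only a single $v$.
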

\begin{proof} Follows from Proposition \ref{prop:DescrSpecP} and Theorem \ref{theo:SpecvsDeltatwist}.
\end{proof}

\begin{corollary} \label{coro:deltaatmostn} 
Let $\alpha \in ]-1,0]$. Then,
\begin{enumerate}
\item $\delta^{\alpha}_P (z)$ is a polynomial of degree at most $n$ in $z$ whose coefficients are nonnegative integers,
\item $L_P^{\alpha}$ is a polynomial of degree $n$ (if not identically equal to zero) and  
\begin{equation}\label{eq:DeltaEhrhart}
L_P^{\alpha} (z)=\delta_0^{\alpha}\binom{z+n}{n}+\delta_1^{\alpha} \binom {z+n-1}{n} +\cdots +\delta_{n-1}^{\alpha} 
\binom {z+1}{n}+\delta_n^{\alpha} \binom {z}{n}
\end{equation}
if $\delta^{\alpha}_P (z) =\delta_0^{\alpha} +\delta_1^{\alpha} z +\cdots +\delta_n^{\alpha} z^n$.
\end{enumerate}
\end{corollary}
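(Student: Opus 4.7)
The plan is to read off part 1 directly from Corollary \ref{coro:DescrDeltaAlpha} and to derive part 2 by inverting the generating series defining $\delta_P^{\alpha}(z)$.

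For part 1, Corollary \ref{coro:DescrDeltaAlpha} already exhibits any nonzero $\delta_P^{\alpha}(z)$ as the polynomial
$$\delta_P^{\alpha}(z) = c_v^{0} z^{\lceil \nu(v)\rceil} + \cdots + c_v^{n-\dim\sigma(v)} z^{\lceil \nu(v)\rceil + n - \dim\sigma(v)},$$
whose coefficients $c_v^{i} = \dim H^{2i}(X(\Sigma_P/\sigma(v)),\qit)$ are nonnegative integers. The only thing to check is the degree bound. Writing $d := \dim \sigma(v)$ and $v = \sum_{i=1}^{d} q_i b_i$ with $q_i \in [0,1[$ and $b_i$ the generators of $\sigma(v)$, I get $\nu(v) = \sum_i q_i \leq d$ (with strict inequality when $v \neq 0$, and $0 = d$ when $v = 0$), so $\lceil \nu(v)\rceil \leq d$ and the degree $\lceil \nu(v)\rceil + n - d$ is at most $n$.

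For part 2, the defining relation and the degree bound above together yield
$$\sum_{m \geq 0} L_P^{\alpha}(m)\, z^m \;=\; \frac{\delta_P^{\alpha}(z)}{(1-z)^{n+1}}\;=\; \sum_{k=0}^{n} \delta_k^{\alpha}\,\frac{z^k}{(1-z)^{n+1}}.$$
The key elementary identity is
$$\frac{z^k}{(1-z)^{n+1}} \;=\; \sum_{m\geq 0} \binom{m+n-k}{n}\, z^m \qquad (0 \leq k \leq n),$$
where $\binom{x}{n} := x(x-1)\cdots(x-n+1)/n!$ is viewed as a polynomial in $x$; its zeros at $x = 0,1,\ldots,n-1$ are precisely what makes the terms with $0 \leq m < k$ vanish, matching the true expansion. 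Summing over $k$ gives the closed formula (\ref{eq:DeltaEhrhart}) and shows that $L_P^{\alpha}$ is a polynomial in $z$ of degree at most $n$. Its leading coefficient is $\delta_P^{\alpha}(1)/n!$, and since the $\delta_k^{\alpha}$ are nonnegative integers, $\delta_P^{\alpha}(1) > 0$ as soon as $\delta_P^{\alpha}$ is not identically zero; thus the degree is exactly $n$ in that case.

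The only delicate point is the bookkeeping around the polynomial identity for $\binom{x}{n}$ on the negative side, but once phrased as above it is immediate. The degree check in part 1 is the other potential snag, and reduces to the elementary estimate $\nu(v) \leq \dim \sigma(v)$ on the half-open box $\Box(\sigma(v))$.
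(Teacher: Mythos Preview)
Your proof is correct and follows essentially the same route as the paper. For part~1 the paper invokes Corollary~\ref{coro:DescrDeltaAlpha} and justifies the inequality $\lceil \nu(v)\rceil \le \dim\sigma(v)$ by appealing to the fact that the Newton spectrum is contained in $[0,n]$ (Proposition~\ref{prop:PropertiesSpec}), whereas you derive it directly from the half-open box description; for part~2 the paper simply says ``straightforward'', while you spell out the generating-function inversion explicitly.
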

\begin{proof} The first assertion follows from Corollary \ref{coro:DescrDeltaAlpha} because
where $c_v^i :=\dim H^{2i}(X (\Delta/ \sigma (v)), \qit)$ is a nonnegative integer and because $\lceil \nu (v)\rceil \leq \dim\sigma (v)$ (recall that the Newton spectrum is contained in $[0,n]$, see Proposition \ref{prop:PropertiesSpec}). The second one is straightforward.
\end{proof}

The next observation is borrowed from \cite{Stapledon}:

\begin{proposition}\label{prop:SpecEgalDelta}
Let $P$ be a full dimensional lattice polytope containing the origin as a interior point.
Let $\delta_P (z) :=\sum_{k=0}^n \delta_k z^k $ be the $\delta$-vector of the $P$ and let $\Spec_P (z):=\sum_{i=1}^{\mu_P} z^{\beta_i}$ be its Newton spectrum. Then, for $k=0,\cdots ,n$, the coefficient $\delta_k$ is equal to the number of $\beta_i$'s, $1\leq i\leq \mu_P$, such that $\beta_i\in ]k-1,k]$.
\end{proposition}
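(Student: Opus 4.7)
The plan is to combine Theorem \ref{theo:SpecvsDeltatwist} with the very definition of the weighted $\delta$-vector. By that theorem,
\[
\Spec_P(z) \;=\; \delta_P^{\wt}(z) \;=\; \sum_{\alpha \in ]-1,0]} \delta_P^{\alpha}(z)\, z^{\alpha},
\]
and the observation made just before Theorem \ref{theo:SpecvsDeltatwist} shows that
\[
\delta_P(z) \;=\; \sum_{\alpha \in ]-1,0]} \delta_P^{\alpha}(z).
\]
So both sides of the identity we want to prove are built from the same data, namely the polynomials $\delta_P^{\alpha}(z) = \sum_{k=0}^{n} \delta_k^{\alpha} z^k$ (finite in number by Corollary \ref{coro:DescrDeltaAlpha}), and it is just a matter of sorting exponents.

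Concretely, I would substitute the expansion of each $\delta_P^{\alpha}(z)$ into the first displayed identity to get
\[
\Spec_P(z) \;=\; \sum_{\alpha \in ]-1,0]} \sum_{k=0}^{n} \delta_k^{\alpha} z^{k+\alpha},
\]
and then observe the following key partitioning fact: every real number $\beta$ admits a \emph{unique} decomposition $\beta = k + \alpha$ with $k\in\zit$ and $\alpha\in ]-1,0]$, namely $k = \lceil \beta\rceil$ and $\alpha = \beta - \lceil\beta\rceil$. Hence $\beta \in ]k-1,k]$ if and only if the integer part in this decomposition equals $k$. Since the $\beta_i$ lie in $[0,n]$ (Proposition \ref{prop:PropertiesSpec}), the exponents appearing on the right-hand side above are partitioned by the integer $k \in \{0,1,\dots,n\}$ into which their fractional shift sends them.

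Collecting all exponents of $\Spec_P(z)$ that lie in $]k-1,k]$ therefore yields
\[
\#\{i : \beta_i \in ]k-1,k]\} \;=\; \sum_{\alpha \in ]-1,0]} \delta_k^{\alpha} \;=\; \delta_k,
\]
which is the claimed identity. The one point to be careful about is the half-open convention: the choice $\alpha \in ]-1,0]$ in the definition of $\delta_P^{\wt}$ matches the choice of $]k-1,k]$ in the statement exactly because the integer case $\alpha = 0$ contributes to $\beta = k$, which is the right endpoint of $]k-1,k]$. No real obstacle arises beyond this bookkeeping, since all nontrivial content was already packaged in Theorem \ref{theo:SpecvsDeltatwist}.
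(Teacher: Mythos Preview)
Your proof is correct and follows essentially the same route as the paper: the paper's proof simply says ``the result follows from Corollary \ref{coro:DescrDeltaAlpha} because $\delta_P(z)=\sum_{\alpha\in ]-1,0]}\delta_P^{\alpha}(z)$,'' which is exactly the mechanism you unpack in detail---decomposing $\Spec_P(z)=\sum_{\alpha}\delta_P^{\alpha}(z)z^{\alpha}$ via Theorem \ref{theo:SpecvsDeltatwist}, then collecting exponents by the rule $\lceil\beta\rceil=k\iff\beta\in\,]k-1,k]$. The only point left implicit in your argument is that the $\delta_k^{\alpha}$ are nonnegative integers (so that the count of $\beta_i$'s really equals $\sum_\alpha\delta_k^\alpha$), but this is precisely what Corollary \ref{coro:DescrDeltaAlpha} (or Corollary \ref{coro:deltaatmostn}) supplies, and you cite it anyway.
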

\begin{proof} 
The result follows from Corollary \ref{coro:DescrDeltaAlpha} because $\delta_P (z)=\sum_{\alpha\in ]-1,0]} \delta_P^{\alpha} (z)$. 
\end{proof}

\noindent Using Proposition \ref{prop:PropertiesSpec}, we check that $\delta_0 =1$, $\delta_1 =\Card (P\cap N)-(n+1)$ and $\delta_0 +\cdots +\delta_n =\mu_P$ where $\mu_P$ is the Milnor number of $P$.

\subsection{An algorithm to compute (weighted) Ehrhart polynomials and $\delta$-vectors of reduced simplices}
\label{sec:Algo}

Let $P$ be a full dimensional lattice polytope in $\rit^n$ containing the origin in its interior. 
A general recipe in order to compute the $\delta$-vector of $P$ is now clear:
compute the Newton spectrum of $P$ and use Proposition \ref{prop:SpecEgalDelta}.

This provides an effective algorithm in order to compute the (weighted) $\delta$-vectors and the (weighted) Ehrhart polynomials of a reduced simplex $\Delta$ containing the origin as an interior point because the Newton spectrum of $\Delta$ is known:
\begin{itemize}
\item compute the weight of $\Delta$ (use equation (\ref{eq:Defqi})) and check that $\Delta$ is reduced,
\item compute the Newton spectrum of $\Delta$ using Theorem \ref{theo:specEPP},
\item  use Theorem \ref{theo:SpecvsDeltatwist}, Corollary \ref{coro:deltaatmostn} and equation (\ref{eq:DescrDeltaAlpha}) in order to compute the weighted $\delta$-vectors 
$\delta_{\Delta}^{\alpha} (z)$ and weighted Ehrhart polynomials $L_{\Delta}^{\alpha} (z)$,
\item  use Proposition \ref{prop:SpecEgalDelta} in order to get the $\delta$-vector $\delta_{\Delta} (z) :=\delta_0 +\delta_1 z +\cdots +\delta_n z^n$ of $\Delta$ and formula (\ref{eq:EhrBinom})  in order to get the Ehrhart polynomial $L_{\Delta} (z)$ of $\Delta$.
\end{itemize}

\begin{example}                      
Let 
$$\Delta :=\conv ( (1,0, 0), (0,2,0), (1, 1,1), (-3,-5,-2))$$
in $\rit^3$. Its weight is $Q(\Delta )= (2,2,3,4)$ and $\Delta $ is reduced.
We have $\mu_{\Delta}  =11$ and
$$\Spec_{\Delta}  (z) =1+z+z^2 +z^3 +z^{5/4}+z^{4/3}+z^{1/2}+z^{3/2} +z^{5/2} +z^{5/3}+z^{7/4}$$
by Theorem \ref{theo:specEPP}. 
Proposition \ref{prop:SpecEgalDelta} provides $\delta_0 =1$, $\delta_1 = 2$, $\delta_2 =6$, $\delta_3 =2$ 
and we get
$$L_{\Delta}  (z)= \frac{1}{6}(11z^3 +6z^2 +13 z +6).$$
\end{example}

\begin{example} \label{ex:ExSpecDim4Suite}   (Example \ref{ex:ExSpecDim4} continued)               
Let us consider the simplex
$$\Delta :=\conv ( (1,0, 0,0), (0,1,0,0), (0,0,1,0), (0,0,0,1),  (-1,-1,-1,-5))$$
 in $\rit^4$. 
Its weight is $Q(\Delta )= (1,1,1,1,5)$ and $\Delta$ is reduced.
We have $\mu_{\Delta}  =9$ and
$$\Spec_{\Delta}  (z) =1+z+z^2 +z^3 +z^4 +z^{16/5}+z^{12/5}+z^{8/5}+z^{4/5}$$
by Example \ref{ex:ExSpecDim4}.            
Proposition \ref{prop:SpecEgalDelta} provides $\delta_0 =1$, $\delta_1 = 2$, $\delta_2 =2$, $\delta_3 =2$, $\delta_4 =2$ 
and we get
$$L_{\Delta}  (z)= \frac{1}{24}(9z^4 +10 z^3 +75 z^2 +50z+24).$$
We have also
$\delta_{\Delta}^0 (z) =1+z+z^2 +z^3 +z^4$, $\delta_{\Delta}^{-1/5} (z) =z$, $\delta_{\Delta}^{-2/5} (z) =z^2$,
$\delta_{\Delta}^{-3/5} (z) =z^3$, $\delta_{\Delta}^{-4/5} (z) =z^4$
and
$$L_{\Delta}^{-1/5} (z)=\frac{1}{24}(z^4 +6z^3 +11z^2 +6z),\ L_{\Delta}^{-2/5} (z)=\frac{1}{24}(z^4 +2z^3 -z^2-2z),$$
$$L_{\Delta}^{-3/5} (z)=\frac{1}{24}(z^4 -2z^3 -z^2 +2z),\  L_{\Delta}^{-4/5} (z)=\frac{1}{24}(z^4 -6z^3 11z^2 -6z),$$
$$L_{\Delta}^{0} (z)= \frac{1}{24}(5z^4 +10z^3 +55z^2 +50z +24).$$
In particular, weighted Ehrhart polynomials may have negative coefficients.
\end{example}

\noindent If $q_0 =1$ and if $\Delta$ is reflexive, another formula (with a different proof) for the $\delta$-vector of $\Delta$ in terms of the entries of the vector $Q(\Delta )$ is given in \cite[Theorem 2.2]{BDS}.

\subsection{Ehrhart polynomials of Newton polytopes of polynomials}
\label{sec:ToricSpectrum}

We give a few words about Newton polytopes of {\em polynomials} (and not Laurent polynomials). We briefly recall the framework.
The {\em support} of a polynomial $g=\sum_{m\in \nit^n}a_m u^m \in\cit [u_1 ,\cdots , u_n ]$, where we write $u^m:= u_1^{m_1}\cdots u_n^{m_n}$ if $m=(m_1 ,\cdots , m_n )\in\nit^n$, is
$\supp (g)=\{m\in\nit^n,\ a_m \neq 0\}$ 
and the {\em Newton polytope} of the polynomial $g$ is the convex hull of $\{0\}\cup \supp (g)$ in $\rit^n_+$.
The {\em toric} Newton spectrum of $g$ (or the toric Newton spectrum of its Newton polytope) is 
$$\sum_{\alpha\in\qit}\dim_{\cit} \gr^{\mathcal{N}}_{\alpha}\frac{\cit [u_1 ,\cdots ,u_n ]}{(u_1\frac{\partial g}{\partial u_1},\cdots , u_n\frac{\partial g}{\partial u_n})} z^{\alpha},$$  
see \cite[Definition 3.1]{D}.
Thanks to \cite[Theorem 3.2]{D}, Theorem \ref{theo:SpecvsDeltatwist} and its corollaries (in particular Proposition \ref{prop:SpecEgalDelta}) still hold, with the same proofs, for the toric Newton spectrum of the Newton polytope of a convenient and nondegenerate polynomials. This gives a recipe in order to calculate Ehrahrt polynomials of Newton polytopes of polynomials from their toric Newton spectrum.
 We give some examples.
\begin{enumerate}
\item Let 
$$\Delta :=\conv ((0,0,0), (a,0,0), (0,b,0), (0,0,c))$$
where $a$, $b$ and $c$ are positive integers. 
 It is the Newton polytope of $f(u_1 , u_2 ,u_3 )=u_1^a +u_2^b +u_3^c$. Its toric Newton spectrum is the sequence of the following rational numbers: 
\begin{itemize}
\item $0$, 
\item $\frac{i}{a}$ for $1\leq i\leq a-1$,
$\frac{i}{b}$ for $1\leq i\leq b-1$, $\frac{i}{c}$ for $1\leq i\leq c-1$, 
\item $\frac{i}{a}+\frac{j}{b}$ for $1\leq i\leq a-1$ and $1\leq j\leq b-1$,
$\frac{i}{a}+\frac{j}{c}$ for $1\leq i\leq a-1$ and $1\leq j\leq c-1$,
$\frac{i}{b}+\frac{j}{c}$ for $1\leq i\leq b-1$ and $1\leq j\leq c-1$, 
\item $\frac{i}{a}+\frac{j}{b}+\frac{k}{c}$ for $1\leq i\leq a-1$, $1\leq j\leq b-1$ and $1\leq k\leq c-1$.
\end{itemize}
Its $\delta$-vector is then given by Proposition \ref{prop:SpecEgalDelta}.
For instance, if $a=2$, $b=3$ and $c=3$
we get $\delta_0 =1$, $\delta_1 =10$, $\delta_2 =7$, $\delta_3 =0$. If $a=1$, $b=1$ and $c=1$ (this corresponds to the standard simplex in $\rit^n$) we get $\delta_0 =1$, $\delta_1 =\delta_2 =\delta_3 =0$.
\item Let us consider the polytope in $\rit^3$
$$P:=\conv ((0,0,0),(1,0,0), (0,1,0), (0,0,1), (1,1,h))$$
where $h$ is an integer greater or equal to $2$. This is a variation of Reeve's tetrahedron, see \cite[Example 3.22]{BeckRobbins}. It is the Newton polytope of $f(u_1 , u_2 , u_3 )=u_1 +u_2 +u_3 +u_1 u_2 u_3^h$
whose toric Newton spectrum is equal to
$1+z+z^2 +\sum_{i=1}^{h-1} z^{1+\frac{i}{h}}$.
Thus, $\delta_0 =1$, $\delta_1 =1$, $\delta_2 =h$, $\delta_3 =0$ and 
$$L_{P} (z)=\frac{1}{6}[(h+2) z^3 +9z^2 +(13-h)z +6].$$ 
Notice that the coefficient of $z$ is negative if $h\geq 14$.
\end{enumerate}

\section{Reflexive polytopes and their Newton spectrum}

\label{sec:ReflAndInt}

We give in this section various characterizations of reflexive polytopes involving their Newton spectrum. 
We show in particular that the spectrum of a polytope is equal to its $\delta$-vector if and only if $P$ is reflexive.

\begin{proposition}\label{prop:ReflexivevsIntegral}
Let $P$ be a polytope containing the origin in its interior and let $\delta_P (z) =\delta_0 +\delta_1 z +\cdots +\delta_n z^n$ be its $\delta$-vector. Then the following are equivalent:
\begin{enumerate}
\item $\Spec_P (z)$ is a polynomial, 
\item $P$ is reflexive,
\item $\Spec_{P}(z)=\delta_P (z):=\delta_0 +\delta_1 z +\cdots +\delta_n z^n$.
\end{enumerate}
\end{proposition}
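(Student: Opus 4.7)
The plan is to establish the cycle of implications $(1)\Rightarrow (2)\Rightarrow (3)\Rightarrow (1)$. Two of these are essentially immediate. For $(3)\Rightarrow (1)$, $\delta_P(z)$ is by construction a polynomial in $z$. For $(2)\Rightarrow (3)$, if $P$ is reflexive then every $u_F$ lies in $M$ by (\ref{eq:VerticesPolaire}), so the Newton function $\nu(v)=\max_F\langle u_F,v\rangle$ takes integer values on $N$; every spectral exponent $\beta_i$ is then a nonnegative integer, and Proposition \ref{prop:SpecEgalDelta}, which identifies $\delta_k$ with the number of $\beta_i$ in $(k-1,k]$, gives $\Spec_P(z)=\delta_P(z)$ at once.

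The substantive direction is $(1)\Rightarrow (2)$. I would start from the geometric decomposition of Proposition \ref{prop:DescrSpecP},
$$\Spec_P(z)=\sum_{v\in\Box(\Sigma_P)\cap N} c_v(z)\, z^{\nu(v)},$$
where each coefficient polynomial $c_v(z)=\sum_{i=0}^{n-\dim\sigma(v)}\dim H^{2i}(X(\Sigma_P/\sigma(v)),\qit)\,z^i$ has nonnegative integer coefficients and is nonzero (its constant term is $\dim H^0=1$). Because nonnegativity forbids cancellation between terms with different fractional parts of $\nu(v)$, the assumption that $\Spec_P(z)$ is a polynomial in $z$ forces $\nu(v)\in\zit$ for every $v\in\Box(\Sigma_P)\cap N$. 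Fix now a facet $F$ with vertices $v_{i_1},\ldots,v_{i_n}$ and its associated simplicial cone $\sigma_F$; on $\sigma_F$ the Newton function agrees with $\langle u_F,\cdot\rangle$, and the standard decomposition $\sigma_F\cap N=(\Box(\sigma_F)\cap N)+\nit\{v_{i_1},\ldots,v_{i_n}\}$, together with $\langle u_F,v_{i_j}\rangle=1$, yields $\langle u_F,w\rangle\in\zit$ for every $w\in\sigma_F\cap N$.

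The step I expect to be the main obstacle is promoting this to $u_F\in M$, i.e., $\zit$-valuedness on the whole of $N$ and not merely on $\sigma_F\cap N$. My plan is to exploit full-dimensionality of $\sigma_F$: fixing any lattice point $w_0$ in the interior of $\sigma_F$, for every $v\in N$ one has $v+kw_0\in\sigma_F$ for all $k$ large enough, so $v=(v+kw_0)-kw_0$ realizes $v$ as a difference of two elements of $\sigma_F\cap N$ and $\langle u_F,v\rangle\in\zit$ follows. Hence $u_F\in M$ for every facet $F$, and (\ref{eq:VerticesPolaire}) then shows that $P^\circ$ is a lattice polytope, i.e., $P$ is reflexive, closing the cycle.
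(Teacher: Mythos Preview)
Your proof is correct and follows essentially the same route as the paper's: both arguments use Proposition~\ref{prop:DescrSpecP} to see that $\nu$ takes integer values on $\Box(\sigma_F)\cap N$, extend this to $\sigma_F\cap N$, and then promote to $u_F\in M$, combined with Proposition~\ref{prop:SpecEgalDelta} for the link to $\delta_P$. The only cosmetic difference is in that last promotion step, where the paper appeals to the duality isomorphism $\Hom_\zit(N_{\sigma_F},\zit)\cong M/(\sigma_F^{\perp}\cap M)$ while you unwind the same fact by hand, writing an arbitrary $v\in N$ as a difference of two lattice points of $\sigma_F$.
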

\begin{proof} $1 \Leftrightarrow 2$.
 Assume that $P$ is reflexive: if
$n$ belongs to the cone $\sigma_F$ supported by the facet $F$, the Newton function of $P$ is defined by 
$\nu : n\mapsto <u_F ,n >$ where $u_F \in M$ by (\ref{eq:VerticesPolaire}). By Proposition \ref{prop:DescrSpecP}, $\Spec_P (z)$ is a polynomial. 
Conversely, assume that $\Spec_P (z)$ is a polynomial. Let $F$ be a facet of $P$. Again by Proposition \ref{prop:DescrSpecP}, the Newton function $\nu$ of $P$ takes integral values on the fundamental domain
$\Box(\sigma_F )$. It follows that 
$\nu$ takes integral values on $\sigma_F\cap N$ and, by linearity, we get a $\zit$-linear map 
$\nu_{\sigma_F} : N_{\sigma_F} \rightarrow \zit$ where $N_{\sigma_F}$ is the sublattice of $N$ generated by the points of $\sigma_F \cap N$. 
Because we have the isomorphism $\Hom_{\zit}(N_{\sigma_F}, \zit )\cong M/\sigma_{F}^{\perp}\cap M$ induced by the dual pairing between $M$ and $N$, there exists $u_{F}\in M$ such that
$\nu (n)=\langle u_{F}, n \rangle$ when $n\in \sigma_F$. In particular, the equation of $F$ is
$\langle u_{F}, x \rangle=1$ for $u_F \in M$. Thus, $P$ is reflexive. 

\noindent $1\Leftrightarrow 3$.  Assume that $\Spec_P (z)$ is a polynomial: by Proposition \ref{prop:SpecEgalDelta}, 
$\delta_i =\Card \{ \alpha\in \Spec_P ,\ \alpha =i\} $ thus $\Spec_P (z)=\delta_P (z)$. Conversely, if 
$\Spec_P (z)=\delta_P (z)$ the Newton spectrum is obviously a polynomial.
\end{proof}

\begin{remark}\label{rem:Unimodality} 
Recall that a polynomial $a_0 +a_1 z+\cdots +a_n z^n$ is unimodal if there exists an index $j$ such that $a_i \leq a_{i+1}$ for all $i<j$ and $a_i \geq a_{i+1}$ for all $i\geq j$. It follows from Corollary \ref{coro:SpectrumInfinitySpecPolytope}, Proposition \ref{prop:ReflexivevsIntegral}
and \cite{MustataPayne}, \cite{Payne} that the spectrum at infinity of a Laurent polynomial is not always unimodal (and this corrects a misguided assertion made in \cite {D0}). This can be directly checked: let 
$$\Delta :=\conv (e_1 ,\cdots ,e_9 ,-\sum_{i=1}^9 q_i e_i )$$
where $(e_1 ,\cdots , e_9 )$ is the standard basis of $\rit^9$ and $(q_1 ,\cdots , q_9 ):= (1, \cdots ,1,3)$ (this example is borrowed from \cite{Payne}, see also \cite[Section 2]{BDS}). Then 
$\mu_{\Delta}=12$ and $Q(\Delta )=(1,\cdots , 1, 3)$ where $1$ is counted $9$-times: the simplex $\Delta$ is reduced and reflexive. 
We get from  Theorem \ref{theo:specEPP}
$$\Spec_{\Delta}(z)=1 + z + z^2 +2z^3 + z^4 +z^5 +2z^6 +z^7 +z^8 +z^9$$
and this polynomial is not unimodal. It follows that the spectrum at infinity of the Laurent polynomial
$$f(u_1 ,\cdots , u_9 )=u_1 +\cdots +u_9 +\frac{1}{u_1 \cdots u_8 u_9^3}$$
(see Section \ref{sec:DefSpectrumPolytope}) is not unimodal.   
\end{remark}

\begin{remark}\label{rem:Variance} 
Let $P$ be a reflexive polytope and let $\Spec_{P}(z)=\delta_P (z):=\delta_0 +\delta_1 z +\cdots +\delta_n z^n$ be its $\delta$-vector.
The mean of the $\delta$-vector is $\frac{1}{\mu_{P}}\sum_{i=0}^n i\delta_i $ (and this is equal to $\frac{n}{2}$ because $\delta_i =\delta_{n-i}$ if $P$ is reflexive, as it could be deduced from Proposition \ref{prop:ReflexivevsIntegral}) and its variance is $\frac{1}{\mu_{P}}\sum_{i=0}^n \delta_i (i-\frac{n}{2})^2$. 
After Corollary \ref{coro:SpectrumInfinitySpecPolytope}, Proposition \ref{prop:ReflexivevsIntegral}
and the version of Hertling's conjecture mentioned in the introduction (see \cite{D0} for details),
the inequality $\frac{1}{\mu_{P}}\sum_{i=0}^n \delta_i (i-\frac{n}{2})^2 \geq \frac{n}{12}$ is expected. 
\end{remark}

\end{document}